\newtheorem{Thm}{Theorem}[section]
\newtheorem{Prop}[Thm]{Proposition}
\theoremstyle{definition}
\newtheorem{Def}[Thm]{Definition}
\newtheorem{Rmk}[Thm]{Remark}
\setlist[itemize]{leftmargin = *}
\setlist[enumerate]{leftmargin = *}
\newcommand{\B}[1]{\func{\mathscr{B}}{#1}}
\newcommand{\C}{\mathbb{C}}
\newcommand{\D}{\displaystyle}
\newcommand{\K}{\mathbb{K}}
\newcommand{\N}{\mathbb{N}}
\newcommand{\R}{\mathbb{R}}
\newcommand{\Br}[1]{\( #1 \)}
\newcommand{\df}{\stackrel{\operatorname{df}}{=}}
\newcommand{\OC}[2]{\( #1,#2 \]}
\newcommand{\OO}[2]{\Br{#1,#2}}
\newcommand{\RV}[2]{\func{\mathsf{RV}}{#1,#2}}
\newcommand{\Abs}[1]{\mleft| #1 \mright|}
\newcommand{\Int}[3]{\int_{#1} #2 ~ \mathrm{d}{#3}}
\newcommand{\Seq}[2]{\Br{#1}_{#2}}
\newcommand{\Set}[2]{\mleft\{ #1 ~ \middle| ~ #2 \mright\}}
\newcommand{\Card}[1]{\func{\mathsf{Card}}{#1}}
\newcommand{\func}[2]{#1 \Br{#2}}
\newcommand{\Pair}[2]{\Br{#1,#2}}
\newcommand{\SqBr}[1]{\[ #1 \]}
\newcommand{\SSet}[1]{\mleft\{ #1 \mright\}}
\newcommand{\Trip}[3]{\Br{#1,#2,#3}}
\newcommand{\Power}[1]{\func{\mathcal{P}}{#1}}
\newcommand{\converges}[1]{\stackrel{#1}{\longrightarrow}}
\renewcommand{\(}{\mleft(}
\renewcommand{\)}{\mright)}
\renewcommand{\[}{\mleft[}
\renewcommand{\]}{\mright]}
\renewcommand{\c}{\mathsf{c}}
\renewcommand{\P}{\mathsf{P}}
\begin{document}



\title[Generalization of a real-analysis result]{Generalization of a real-analysis result to \\ a class of topological vector spaces}
\author{Leonard T. Huang}
\email{Leonard.Huang@Colorado.EDU}
\address{Department of Mathematics \\ University of Colorado at Boulder \\ Campus Box 395 \\ Boulder \\ Colorado 80309 \\ United States of America}
\keywords{Topological vector spaces, locally convex topological vector spaces, $ p $-homogeneous seminorms, random variables, convergence in probability}
\subjclass[2010]{28A20, 46A16, 60A10}

\maketitle



\begin{abstract}
In this paper, we generalize an elementary real-analysis result to a class of topological vector spaces. We also give an example of a topological vector space to which the result cannot be generalized.
\end{abstract}



\section{Introduction}


This paper draws its inspiration from the following result, which appears to be a popular real-analysis exam problem (see \cite{KU}, for example):

Let $ \Seq{x_{n}}{n \in \N} $ be a sequence in $ \R $. If $ \D \lim_{n \to \infty} \Br{2 x_{n + 1} - x_{n}} = x $ for some $ x \in \R $, then $ \D \lim_{n \to \infty} x_{n} = x $.

A quick proof can be given using the Stolz-Ces\`aro Theorem.

A natural question to ask is: Is this result still valid if $ \R $ is replaced by another topological vector space? The answer happens to be affirmative for a wide class of topological vector spaces that includes all the locally convex ones.

We will also exhibit a topological vector space for which the result is not valid, which indicates that it is rather badly behaved.

In this paper, we adopt the following conventions:
\begin{itemize}
\item
$ \N $ denotes the set of all positive integers, and for each $ n \in \N $, let $ \SqBr{n} \df \N_{\leq n} $.

\item
All vector spaces are over the field $ \K \in \SSet{\R,\C} $.
\end{itemize}



\section{Good topological vector spaces}


Recall that a topological vector space is an ordered pair $ \Pair{V}{\tau} $, where:
\begin{itemize}
\item
$ V $ is a vector space, and

\item
$ \tau $ is a topology on $ V $, under which vector addition and scalar multiplication are continuous operations.
\end{itemize}



\begin{Def}
Let $ \Pair{V}{\tau} $ be a topological vector space, and $ \Seq{x_{\lambda}}{\lambda \in \Lambda} $ a net in $ V $. Then $ x \in V $ is called a \emph{$ \tau $-limit} for $ \Seq{x_{\lambda}}{\lambda \in \Lambda} $ --- which we write as $ \Seq{x_{\lambda}}{\lambda \in \Lambda} \converges{\tau} x $ --- if and only if for each $ \tau $-neighborhood $ U $ of $ x $, there is a $ \lambda_{0} \in \Lambda $ such that $ x_{\lambda} \in U $ for all $ \lambda \in \Lambda_{\geq \lambda_{0}} $.
\end{Def}



\begin{Rmk}
We do not assume that $ \tau $ is a Hausdorff topology on $ V $.
\end{Rmk}



\begin{Def}
A topological vector space $ \Pair{V}{\tau} $ is said to be \emph{good} if and only if any sequence $ \Seq{x_{n}}{n \in \N} $ in $ V $ has a $ \tau $-limit whenever $ \Seq{2 x_{n + 1} - x_{n}}{n \in \N} $ has a $ \tau $-limit.

A topological vector space that is not good is said to be \emph{bad}.
\end{Def}



\begin{Prop} \label{Limits Are Topologically Indistinguishable}
Let $ \Pair{V}{\tau} $ be a topological vector space, and $ \Seq{x_{n}}{n \in \N} $ a sequence in $ V $ such that $ \Seq{2 x_{n + 1} - x_{n}}{n \in \N} \converges{\tau} x $ for some $ x \in V $. Then either
\begin{itemize}
\item
$ \Seq{x_{n}}{n \in \N} \converges{\tau} x $ also, or

\item
$ \Seq{x_{n}}{n \in \N} $ has no $ \tau $-limit.
\end{itemize}
\end{Prop}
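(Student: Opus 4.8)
The plan is to dispose of the statement by a dichotomy on whether $ \Seq{x_{n}}{n \in \N} $ converges at all: if it has no $ \tau $-limit, the second alternative holds and there is nothing to prove, so I would assume throughout that $ \Seq{x_{n}}{n \in \N} \converges{\tau} y $ for some $ y \in V $ and aim to show that $ x $ is \emph{also} a $ \tau $-limit. The first observation is that shifting the index does not disturb convergence, so $ \Seq{x_{n + 1}}{n \in \N} \converges{\tau} y $ as well; applying continuity of scalar multiplication and of vector addition then yields $ \Seq{2 x_{n + 1} - x_{n}}{n \in \N} \converges{\tau} 2 y - y = y $. Combined with the hypothesis, the single sequence $ \Seq{2 x_{n + 1} - x_{n}}{n \in \N} $ now converges both to $ x $ and to $ y $.

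The next step is to extract from this double convergence the fact that $ x $ and $ y $ are topologically indistinguishable. Writing $ y_{n} \df 2 x_{n + 1} - x_{n} $, continuity of translation turns $ \Seq{y_{n}}{n \in \N} \converges{\tau} x $ and $ \Seq{y_{n}}{n \in \N} \converges{\tau} y $ into $ \Seq{y_{n} - x}{n \in \N} \converges{\tau} 0 $ and $ \Seq{y_{n} - y}{n \in \N} \converges{\tau} 0 $. Continuity of subtraction then forces the \emph{constant} sequence $ \Seq{\Br{y_{n} - y} - \Br{y_{n} - x}}{n \in \N} = \Seq{x - y}{n \in \N} $ to converge to $ 0 $, and a constant sequence converges to $ 0 $ precisely when its value lies in every $ \tau $-neighborhood of $ 0 $, i.e.\ when it belongs to $ \overline{\{0\}} $. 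Hence $ x - y \in \overline{\{0\}} $.

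Finally, I would convert membership in $ \overline{\{0\}} $ into topological indistinguishability and conclude. Recall the standard fact that in a topological vector space $ \overline{\{0\}} = \bigcap U $, the intersection taken over all $ \tau $-neighborhoods $ U $ of $ 0 $, and that this set is symmetric. Given any open $ O \ni x $, the set $ O - x $ is a neighborhood of $ 0 $, so it contains $ \overline{\{0\}} $ and hence contains $ y - x $; therefore $ y \in O $, and by symmetry every open set containing $ y $ contains $ x $. Thus $ x $ and $ y $ have identical neighborhood filters, so $ \Seq{x_{n}}{n \in \N} \converges{\tau} y $ immediately upgrades to $ \Seq{x_{n}}{n \in \N} \converges{\tau} x $, as desired. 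The one point requiring genuine care is precisely this last passage: because $ \Pair{V}{\tau} $ is not assumed Hausdorff, a sequence can have many limits, so one cannot simply invoke uniqueness of limits to identify $ y $ with $ x $; the analysis of $ \overline{\{0\}} $ is what replaces that uniqueness.
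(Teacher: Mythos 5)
Your proof is correct and takes essentially the same approach as the paper's: assume $ \Seq{x_{n}}{n \in \N} \converges{\tau} y $, deduce that $ \Seq{2 x_{n + 1} - x_{n}}{n \in \N} $ converges to both $ x $ and $ y $, and subtract this sequence from itself to conclude that $ x $ and $ y $ are topologically indistinguishable, so the limit $ y $ upgrades to the limit $ x $. Your explicit passage through $ \overline{\{0\}} = \bigcap_{U \ni 0} U $ merely spells out the indistinguishability step that the paper compresses into the remark that every $ \tau $-neighborhood of $ x $ contains $ y $.
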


\begin{proof}
If $ \Seq{x_{n}}{n \in \N} $ has no $ \tau $-limit, then we are done.

Next, suppose that $ \Seq{x_{n}}{n \in \N} \converges{\tau} y $ for some $ y \in V $. Then
$$
                 \Seq{2 x_{n + 1} - x_{n}}{n \in \N}
\converges{\tau} 2 y - y
=                y,
$$
so $ y $ is a $ \tau $-limit for $ \Seq{2 x_{n + 1} - x_{n}}{n \in \N} $ in addition to $ x $. It follows that
$$
                 \Seq{0_{V}}{n \in \N}
=                \Seq{\Br{2 x_{n + 1} - x_{n}} - \Br{2 x_{n + 1} - x_{n}}}{n \in \N}
\converges{\tau} x - y,
$$
which yields
$$
                 \Seq{y}{n \in \N}
=                \Seq{0_{V} + y}{n \in \N}
\converges{\tau} \Br{x - y} + y
=                x.
$$
Therefore, any $ \tau $-neighborhood of $ x $ also contains $ y $, giving us $ \Seq{x_{n}}{n \in \N} \converges{\tau} x $.
\end{proof}


\cref{Limits Are Topologically Indistinguishable} tells us: To prove that a topological vector space $ \Pair{V}{\tau} $ is good, it suffices to prove that for each sequence $ \Seq{x_{n}}{n \in \N} $ in $ V $, if $ \Seq{2 x_{n + 1} - x_{n}}{n \in \N} \converges{\tau} x $ for some $ x \in V $, then $ \Seq{x_{n}}{n \in \N} \converges{\tau} x $ also.



\begin{Def}
Let $ p \in \OC{0}{1} $. A \emph{$ p $-homogeneous seminorm} on a vector space $ V $ is then a function $ \sigma: V \to \R_{\geq 0} $ with the following properties:
\begin{enumerate}
\item
\textbf{The Triangle Inequality:} $ \func{\sigma}{x + y} \leq \func{\sigma}{x} + \func{\sigma}{y} $ for all $ x,y \in V $.

\item
\textbf{$ p $-Homogeneity:} $ \func{\sigma}{k x} = \Abs{k}^{p} \func{\sigma}{x} $ for all $ k \in \K $ and $ x \in V $.
\end{enumerate}
\end{Def}



\begin{Rmk}
\begin{itemize}
\item
By letting $ k = 0 $ and $ x = 0_{V} $ in (2), we find that $ \func{\sigma}{0_{V}} = 0 $.

\item
A $ 1 $-homogeneous seminorm is the same as a seminorm in the ordinary sense.

\item
No extra generality is gained by postulating that $ \func{\sigma}{k x} \leq \Abs{k}^{p} \func{\sigma}{x} $ for all $ k \in \K $ and $ x \in V $. If $ k \in \K \setminus \SSet{0} $, then replacing $ k $ by $ \dfrac{1}{k} $ gives us the reverse inequality, which leads to equality; if $ k = 0 $, then equality automatically holds.

\item
We do not consider $ p \in \OO{2}{\infty} $ because
\begin{align*}
\forall x \in V: \quad
       2^{p} \func{\sigma}{x}
& =    \func{\sigma}{2 x} \qquad \Br{\text{By $ p $-homogeneity.}} \\
& =    \func{\sigma}{x + x} \\
& \leq 2 \func{\sigma}{x}, \qquad \Br{\text{By the Triangle Inequality.}}
\end{align*}
so if $ \sigma $ is non-trivial, then $ 2^{p} \leq 2 $, which implies that $ p \in \OC{0}{1} $ if $ p \in \R_{> 0} $.
\end{itemize}
\end{Rmk}


Let $ V $ be a vector space, and $ \mathcal{S} $ a collection of $ p $-homogeneous seminorms on $ V $ where $ p \in \OC{0}{1} $ may not be fixed. Define a function $ \mathcal{U}: V \times \mathcal{S} \times \R_{> 0} \to \Power{V} $ by
$$
\forall x \in V, ~ \forall \sigma \in \mathcal{S}, ~ \forall \epsilon \in \R_{> 0}: \quad
\mathcal{U}_{x,\sigma,\epsilon} \df \Set{y \in V}{\func{\sigma}{y - x} < \epsilon}.
$$
Then let $ \tau_{\mathcal{S}} $ denote the topology on $ V $ that is generated by the sub-base
$$
\Set{\mathcal{U}_{x,\sigma,\epsilon} \in \Power{V}}{\Trip{x}{\sigma}{\epsilon} \in V \times \mathcal{S} \times \R_{> 0}}.
$$



\begin{Prop} \label{Some Properties of tau_S}
The following statements about $ \tau_{\mathcal{S}} $ hold:
\begin{enumerate}
\item
$ \tau_{\mathcal{S}} $ is a vector-space topology on $ V $.

\item
Let $ \Seq{x_{\lambda}}{\lambda \in \Lambda} $ be a net in $ V $. Then for each $ x \in V $, we have
$$
\Seq{x_{\lambda}}{\lambda \in \Lambda} \converges{\tau_{\mathcal{S}}} x
\qquad \iff \qquad
\lim_{\lambda \in \Lambda} \func{\sigma}{x_{\lambda} - x} = 0 ~ \text{for all} ~ \sigma \in \mathcal{S}.
$$
\end{enumerate}
\end{Prop}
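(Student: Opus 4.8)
The plan is to prove the two claims separately, relying throughout on the fact that a map into a topological space is continuous precisely when the preimage of every \emph{sub-basic} open set is open; this lets me work directly with the generating sets $\mathcal{U}_{x,\sigma,\epsilon}$ rather than with arbitrary $\tau_{\mathcal{S}}$-open sets.

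For (1) I would first verify continuity of addition. Given $(a,b)$ with $a+b \in \mathcal{U}_{x,\sigma,\epsilon}$, set $\delta = \epsilon - \sigma(a+b-x) > 0$; then the product neighborhood $\mathcal{U}_{a,\sigma,\delta/2} \times \mathcal{U}_{b,\sigma,\delta/2}$ is carried into $\mathcal{U}_{x,\sigma,\epsilon}$ by addition, via a single application of the Triangle Inequality, which exhibits the preimage as a neighborhood of each of its points. Continuity of scalar multiplication is handled in the same spirit but needs more care: given $(k,a)$ with $ka \in \mathcal{U}_{x,\sigma,\epsilon}$, I would write $k'a' - x = k'(a'-a) + (k'-k)a + (ka-x)$ and bound $\sigma(k'a'-x)$ by $|k'|^p \sigma(a'-a) + |k'-k|^p \sigma(a) + \sigma(ka-x)$ using the Triangle Inequality and $p$-homogeneity. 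Restricting $k'$ to $|k'-k| < 1$ keeps $|k'|^p < (|k|+1)^p$ bounded; then shrinking the radius on $a'$ controls the first term and shrinking the radius on $k'$ controls the second (since $|k'-k|^p \to 0$ as $k' \to k$, with $\sigma(a)$ a fixed constant), driving the total below $\epsilon$.

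For (2) the sub-base description does the work directly. The forward implication is immediate: for fixed $\sigma$ and $\epsilon$, the set $\mathcal{U}_{x,\sigma,\epsilon}$ is itself a $\tau_{\mathcal{S}}$-neighborhood of $x$ (because $\sigma(x-x) = 0 < \epsilon$), so convergence yields $\sigma(x_\lambda - x) < \epsilon$ eventually, i.e. $\sigma(x_\lambda - x) \to 0$. For the converse, any $\tau_{\mathcal{S}}$-neighborhood $U$ of $x$ contains a finite intersection $\bigcap_{i=1}^n \mathcal{U}_{x_i,\sigma_i,\epsilon_i}$ that still contains $x$; for each $i$ the hypothesis $\sigma_i(x_\lambda - x) \to 0$ together with the Triangle Inequality places $x_\lambda$ in $\mathcal{U}_{x_i,\sigma_i,\epsilon_i}$ from some index $\lambda_i$ onward, and directedness of $\Lambda$ supplies a common $\lambda_0$ dominating $\lambda_1,\dots,\lambda_n$, after which $x_\lambda \in U$.

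The main obstacle is the joint continuity of scalar multiplication: unlike the $1$-homogeneous (ordinary seminorm) case, the exponent $p$ forces one to track the factors $|k'|^p$ and $|k'-k|^p$ rather than $|k'|$ and $|k'-k|$, and the argument must confirm that these behave well (boundedness of the former near $k$, vanishing of the latter as $k' \to k$) so that both cross terms can be made arbitrarily small simultaneously. Everything else reduces to careful but routine applications of the Triangle Inequality and the directedness of the index set.
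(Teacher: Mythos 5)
Your proposal is correct, and it does more than the paper does: the paper's entire proof of this proposition is a deferral (``imitate the proof in the locally convex case, see Chapter 1 of Rudin''), whereas you actually carry out that imitation and, importantly, you handle correctly the one place where the imitation is not literal, namely joint continuity of scalar multiplication. There the seminorm bound $\Abs{k'} \func{\sigma}{a}$ of the $1$-homogeneous case becomes $\Abs{k' - k}^{p} \func{\sigma}{a}$, and your argument supplies exactly the two facts needed for the classical proof to survive the change of exponent: monotonicity of $t \mapsto t^{p}$ on $\R_{\geq 0}$ (so that $\Abs{k'} < \Abs{k} + 1$ gives $\Abs{k'}^{p} < \(\Abs{k} + 1\)^{p}$, keeping the first cross term under control) and continuity of $t \mapsto t^{p}$ at $0$ (so that $\Abs{k' - k}^{p} \to 0$ kills the second). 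Your treatment of part (2) is also careful on the one point where carelessness would be fatal: the sub-basic sets $\mathcal{U}_{x_{i},\sigma_{i},\epsilon_{i}}$ appearing in a basic neighborhood of $x$ are not centered at $x$, and you correctly re-center via the Triangle Inequality, using $\func{\sigma_{i}}{x_{\lambda} - x_{i}} \leq \func{\sigma_{i}}{x_{\lambda} - x} + \func{\sigma_{i}}{x - x_{i}}$ before invoking directedness of $\Lambda$ to obtain a single dominating index. In short, your route is the standard one the paper points at, but written out; a reader who wants a self-contained account of why $p$-homogeneous seminorms generate a vector-space topology would be better served by your sketch than by the paper's citation.
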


\begin{proof}
One only has to imitate the proof in the case of locally convex topological vector spaces that the initial topology generated by a collection of seminorms is a vector-space topology. We refer the reader to Chapter 1 of \cite{R} for details.
\end{proof}



\begin{Prop} \label{Main Theorem}
$ \Pair{V}{\tau_{\mathcal{S}}} $ is a good topological vector space.
\end{Prop}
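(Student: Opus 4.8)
The plan is to invoke the reduction recorded immediately after \cref{Limits Are Topologically Indistinguishable}: to show $\Pair{V}{\tau_{\mathcal{S}}}$ is good, it suffices to prove that whenever $\Seq{2 x_{n+1} - x_n}{n \in \N} \converges{\tau_{\mathcal{S}}} x$ for some $x \in V$, we also have $\Seq{x_n}{n \in \N} \converges{\tau_{\mathcal{S}}} x$. By part (2) of \cref{Some Properties of tau_S}, each of these two convergence statements is equivalent to a family of scalar statements, one for every $\sigma \in \mathcal{S}$. Setting $y_n \df x_n - x$, so that $2 x_{n+1} - x_n - x = 2 y_{n+1} - y_n$, the whole problem collapses to the following assertion, to be established separately for each fixed $\sigma \in \mathcal{S}$: \emph{if $\func{\sigma}{2 y_{n+1} - y_n} \to 0$, then $\func{\sigma}{y_n} \to 0$.}

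To prove this assertion I would first rewrite the underlying recurrence. Putting $\epsilon_n \df 2 y_{n+1} - y_n$, we have $y_{n+1} = \tfrac{1}{2} \Br{y_n + \epsilon_n}$, and unwinding this from an arbitrary base index $N$ gives the explicit formula
$$
y_{N+k} = \frac{1}{2^k} y_N + \sum_{j=0}^{k-1} \frac{1}{2^{k-j}} \epsilon_{N+j},
$$
verified by a routine induction on $k$. Applying $\sigma$, then the Triangle Inequality, and finally $p$-homogeneity (which converts each scalar factor $2^{-m}$ into $2^{-mp}$), and reindexing by $m = k - j$, yields
$$
\func{\sigma}{y_{N+k}} \leq \frac{1}{2^{kp}} \func{\sigma}{y_N} + \sum_{m=1}^{k} \frac{1}{2^{mp}} \func{\sigma}{\epsilon_{N+k-m}}.
$$

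The estimate is then driven by two elementary facts, both available precisely because $p \in \OC{0}{1}$ forces $2^p > 1$. First, the geometric series $\sum_{m=1}^{\infty} 2^{-mp} = \frac{1}{2^p - 1}$ converges; so, given $\delta > 0$, I would pick $N$ large enough that $\func{\sigma}{\epsilon_n} < \delta \Br{2^p - 1} / 2$ for all $n \geq N$, which forces the sum term above below $\delta / 2$ for every $k$. Second, with $N$ now fixed, $\func{\sigma}{y_N}$ is a fixed constant while $2^{-kp} \to 0$ as $k \to \infty$; so there is a $K$ with $2^{-kp} \func{\sigma}{y_N} < \delta / 2$ for all $k \geq K$. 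Combining the two bounds gives $\func{\sigma}{y_n} < \delta$ for all $n \geq N + K$, which is exactly $\func{\sigma}{y_n} \to 0$.

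The only delicate point is the order of quantifiers rather than any deep obstacle: the initial-condition term $2^{-kp} \func{\sigma}{y_N}$ depends on the cutoff $N$ chosen to tame the errors, so $N$ must be fixed before $k$ is sent to infinity. The role of $p$-homogeneity is purely to keep the damping factors summable, and since $p > 0$ guarantees $2^p > 1$, no further hypotheses are needed — the argument never requires $\sigma$ to be a genuine norm nor $\tau_{\mathcal{S}}$ to be Hausdorff, and degenerate seminorms cause no difficulty because we only ever manipulate the nonnegative quantities $\func{\sigma}{\cdot}$.
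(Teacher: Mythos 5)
Your proof is correct and is essentially the paper's own argument in different notation: unwinding the recurrence $y_{n+1} = \tfrac{1}{2}(y_n + \epsilon_n)$ produces exactly the same identity as the paper's telescoping sum $2^m x_{N+m} - x_N = \sum_{k=1}^{m} (2^k x_{N+k} - 2^{k-1} x_{N+k-1})$ after dividing by $2^m$, and both proofs then use $p$-homogeneity together with the geometric series $\sum_{m \geq 1} 2^{-mp} = 1/(2^p - 1)$ to bound $\sigma(y_{N+k})$ by $2^{-kp} \sigma(y_N)$ plus an arbitrarily small error. The only difference is how you conclude --- a two-threshold $\delta/2$ argument rather than the paper's $\limsup$ --- which is purely cosmetic.
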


\begin{proof}
Let $ \Seq{x_{n}}{n \in \N} $ be a sequence in $ V $. Suppose that $ \Seq{2 x_{n + 1} - x_{n}}{n \in \N} \converges{\tau_{\mathcal{S}}} x $ for some $ x \in V $. Then without loss of generality, we may assume that $ x = 0_{V} $. To see why, define a new sequence $ \Seq{y_{n}}{n \in \N} $ in $ V $ by $ y_{n} \df x_{n} - x $ for all $ n \in \N $, so that
\begin{align*}
\forall n \in \N: \quad
    2 y_{n + 1} - y_{n}
& = 2 \Br{x_{n + 1} - x} - \Br{x_{n} - x} \\
& = 2 x_{n + 1} - 2 x - x_{n} + x \\
& = \Br{2 x_{n + 1} - x_{n}} - x.
\end{align*}
Hence,
$$
                               \Seq{2 y_{n + 1} - y_{n}}{n \in \N}
=                              \Seq{\Br{2 x_{n + 1} - x_{n}} - x}{n \in \N}
\converges{\tau_{\mathcal{S}}} x - x
=                              0_{V},
$$
so if we can prove that $ \Seq{y_{n}}{n \in \N} \converges{\tau_{\mathcal{S}}} 0_{V} $, then $ \Seq{x_{n}}{n \in \N} \converges{\tau_{\mathcal{S}}} x $ as desired.

Let $ \sigma \in \mathcal{S} $ and $ \epsilon > 0 $, and suppose that $ \sigma $ is $ p $-homogeneous for some $ p \in \OC{0}{1} $. Then by (2) of \cref{Some Properties of tau_S}, there is an $ N \in \N $ such that
$$
\forall n \in \N_{\geq N}: \quad
  \func{\sigma}{2 x_{n + 1} - x_{n}}
= \func{\sigma}{\Br{2 x_{n + 1} - x_{n}} - 0_{V}}
< \Br{2^{p} - 1} \epsilon.
$$
By $ p $-homogeneity, we thus have
\begin{align*}
\forall k \in \N: \quad
    \func{\sigma}{2^{k} x_{N + k} - 2^{k - 1} x_{N + k - 1}}
& = \func{\sigma}{2^{k - 1} \Br{2 x_{N + k} - x_{N + k - 1}}} \\
& = 2^{\Br{k - 1} p} \func{\sigma}{2 x_{N + k} - x_{N + k - 1}} \\
& < 2^{\Br{k - 1} p} \Br{2^{p} - 1} \epsilon.
\end{align*}
Next, a telescoping sum in conjunction with the Triangle Inequality yields
\begin{align*}
\forall m \in \N: \quad
       \func{\sigma}{2^{m} x_{N + m} - x_{N}}
& =    \func{\sigma}{\sum_{k = 1}^{m} \Br{2^{k} x_{N + k} - 2^{k - 1} x_{N + k - 1}}} \\
& \leq \sum_{k = 1}^{m} \func{\sigma}{2^{k} x_{N + k} - 2^{k - 1} x_{N + k - 1}} \\
& <    \sum_{k = 1}^{m} 2^{\Br{k - 1} p} \Br{2^{p} - 1} \epsilon \\
& =    \Br{2^{m p} - 1} \epsilon.
\end{align*}
Then by $ p $-homogeneity again,
\begin{align*}
\forall m \in \N: \quad
    \func{\sigma}{x_{N + m} - \frac{1}{2^{m}} x_{N}}
& = \func{\sigma}{\frac{1}{2^{m}} \Br{2^{m} x_{N + m} - x_{N}}} \\
& = \frac{1}{2^{m p}} \func{\sigma}{2^{m} x_{N + m} - x_{N}} \\
& < \Br{1 - \frac{1}{2^{m p}}} \epsilon.
\end{align*}
Applying the Triangle Inequality and $ p $-homogeneity once more, we get
$$
\forall m \in \N: \quad
    \func{\sigma}{x_{N + m}}
< \func{\sigma}{\frac{1}{2^{m}} x_{N}} + \Br{1 - \frac{1}{2^{m p}}} \epsilon
= \frac{1}{2^{m p}} \func{\sigma}{x_{N}} + \Br{1 - \frac{1}{2^{m p}}} \epsilon.
$$
Consequently,
$$
     \limsup_{n \to \infty} \func{\sigma}{x_{n}}
=    \limsup_{m \to \infty} \func{\sigma}{x_{N + m}}
\leq \limsup_{m \to \infty} \SqBr{\frac{1}{2^{m p}} \func{\sigma}{x_{N}} + \Br{1 - \frac{1}{2^{m p}}} \epsilon}
=    \epsilon.
$$
As $ \epsilon > 0 $ is arbitrary, we obtain
$$
  \lim_{n \to \infty} \func{\sigma}{x_{n} - 0_{V}}
= \lim_{n \to \infty} \func{\sigma}{x_{n}}
= 0.
$$
Finally, as $ \sigma \in \mathcal{S} $ is arbitrary, (2) of \cref{Some Properties of tau_S} says that $ \Seq{x_{n}}{n \in \N} \converges{\tau_{\mathcal{S}}} 0_{V} $.
\end{proof}


By \cref{Main Theorem}, the class of good topological vector spaces includes:
\begin{itemize}
\item
All locally convex topological vector spaces.

\item
All $ L^{p} $-spaces for $ p \in \OO{0}{1} $, which are generally not locally convex.
\end{itemize}

In the next section, we will give an example of a bad topological vector space.



\section{A bad topological vector space from probability theory}


Before we present the example, let us first fix some probabilistic terminology.



\begin{Def}
Let $ \Trip{\Omega}{\Sigma}{\P} $ be a probability space.
\begin{itemize}
\item
A measurable function from $ \Pair{\Omega}{\Sigma} $ to $ \Pair{\R}{\B{\R}} $ is called a \emph{random variable}.\footnote{$ \B{\R} $ denotes the Borel $ \sigma $-algebra generated by the standard topology on $ \R $.}

\item
The $ \R $-vector space of random variables on $ \Pair{\Omega}{\Sigma} $ is denoted by $ \RV{\Omega}{\Sigma} $.

\item
Let $ \Seq{X_{\lambda}}{\lambda \in \Lambda} $ be a net in $ \RV{\Omega}{\Sigma} $, and let $ X \in \RV{\Omega}{\Sigma} $. Then $ \Seq{X_{\lambda}}{\lambda \in \Lambda} $ is said to \emph{converge in probability} to $ X $ (for $ \P $) if and only if for each $ \epsilon > 0 $, we have
$$
\lim_{\lambda \in \Lambda} \func{\P}{\Set{\omega \in \Omega}{\Abs{\func{X_{\lambda}}{\omega} - \func{X}{\omega}} > \epsilon}} = 0,
$$
in which case, we write $ \Seq{X_{\lambda}}{\lambda \in \Lambda} \converges{\P} X $.
\end{itemize}
\end{Def}


The following theorem says that convergence in probability is convergence with respect to a vector-space topology on the vector space of random variables.



\begin{Thm} \label{Convergence in Probability Is Convergence With Respect To a Vector-Space Topology on the Vector Space of Random Variables}
Let $ \Trip{\Omega}{\Sigma}{\P} $ be a probability space, and define a pseudo-metric $ \rho_{\P} $ on $ \RV{\Omega}{\Sigma} $ by
$$
\forall X,Y \in \RV{\Omega}{\Sigma}: \quad
\func{\rho_{\P}}{X,Y} \df \Int{\Omega}{\frac{\Abs{X - Y}}{1 + \Abs{X - Y}}}{\P}.
$$
Then the topology $ \tau_{\P} $ on $ \RV{\Omega}{\Sigma} $ generated by $ \rho_{\P} $ has the following properties:
\begin{itemize}
\item
$ \tau_{\P} $ is a vector-space topology.

\item
Let $ \Seq{X_{\lambda}}{\lambda \in \Lambda} $ be a net in $ \RV{\Omega}{\Sigma} $. Then for each $ X \in \RV{\Omega}{\Sigma} $, we have
$$
\Seq{X_{\lambda}}{\lambda \in \Lambda} \converges{\P} X
\qquad \iff \qquad
\Seq{X_{\lambda}}{\lambda \in \Lambda} \converges{\tau_{\P}} X.
$$
\end{itemize}
\end{Thm}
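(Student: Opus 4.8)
The plan is to establish the two bullet points separately, treating the net-convergence equivalence as the analytic heart of the statement and deferring the vector-space-topology claim to a short verification built on top of it. Throughout, I would abbreviate the integrand by writing $\func{f}{t} \df \dfrac{t}{1 + t}$ for $t \in \R_{\geq 0}$, and record at the outset the elementary facts that $f$ is non-decreasing, that $0 \leq \func{f}{t} < 1$ for all $t$, and that $f$ is subadditive in the sense that $\func{f}{s + t} \leq \func{f}{s} + \func{f}{t}$. Subadditivity, together with $\func{f}{\Abs{a + b}} \leq \func{f}{\Abs{a} + \Abs{b}}$, is exactly what makes $\rho_{\P}$ satisfy the triangle inequality; symmetry and $\func{\rho_{\P}}{X,X} = 0$ are immediate, so $\rho_{\P}$ is a genuine pseudo-metric and $\tau_{\P}$ is a well-defined topology. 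I would state these lemmas about $f$ first so that the later estimates read cleanly.

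For the convergence equivalence, fix a net $\Seq{X_{\lambda}}{\lambda \in \Lambda}$, fix $X$, and (by translating) reduce to the case $X = 0_{V}$, so that I must compare $\lim_{\lambda} \func{\rho_{\P}}{X_{\lambda},0}= \lim_{\lambda} \Int{\Omega}{\func{f}{\Abs{X_{\lambda}}}}{\P}$ with the condition that $\func{\P}{\SSet{\Abs{X_{\lambda}} > \epsilon}} \to 0$ for every $\epsilon > 0$. First I would prove the forward direction ($\tau_{\P} \Rightarrow \P$): for each fixed $\epsilon > 0$, monotonicity of $f$ gives the Chebyshev-type bound $\func{f}{\epsilon} \cdot \func{\P}{\SSet{\Abs{X_{\lambda}} > \epsilon}} \leq \Int{\SSet{\Abs{X_{\lambda}} > \epsilon}}{\func{f}{\Abs{X_{\lambda}}}}{\P} \leq \func{\rho_{\P}}{X_{\lambda},0}$, and since $\func{f}{\epsilon} > 0$ the right side tending to $0$ forces the probability to $0$. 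For the reverse direction ($\P \Rightarrow \tau_{\P}$), I would split the integral over $\SSet{\Abs{X_{\lambda}} \leq \epsilon}$ and its complement: on the first region the integrand is at most $\func{f}{\epsilon} \leq \epsilon$, contributing at most $\epsilon$; on the second region the integrand is bounded by $1$, contributing at most $\func{\P}{\SSet{\Abs{X_{\lambda}} > \epsilon}}$, which the hypothesis drives below $\epsilon$ eventually. Hence $\limsup_{\lambda} \func{\rho_{\P}}{X_{\lambda},0} \leq 2\epsilon$ for every $\epsilon$, giving convergence to $0$.

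Having pinned down convergence, I would return to the first bullet and argue that $\tau_{\P}$ is a vector-space topology, i.e.\ that addition and scalar multiplication are jointly continuous. Because $\tau_{\P}$ is pseudo-metrizable, continuity at a point is equivalent to sequential continuity, so it suffices to check that $X_n \converges{\tau_\P} X$ and $Y_n \converges{\tau_\P} Y$ imply $X_n + Y_n \converges{\tau_\P} X + Y$, and that $X_n \converges{\tau_\P} X$ with scalars $c_n \to c$ in $\K$ imply $c_n X_n \converges{\tau_\P} c X$. The cleanest route is to transport everything to convergence in probability via the equivalence just proved: convergence in probability is manifestly stable under addition (use $\SSet{\Abs{(X_n + Y_n) - (X + Y)} > \epsilon} \subseteq \SSet{\Abs{X_n - X} > \epsilon/2} \cup \SSet{\Abs{Y_n - Y} > \epsilon/2}$ and subadditivity of $\P$), and under scalar multiplication one splits $c_n X_n - c X = c_n(X_n - X) + (c_n - c)X$ and handles the second term by noting that $(c_n - c)X \converges{\P} 0$ because $\Abs{c_n - c} \to 0$. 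This last point is where one must be a little careful: I expect the main obstacle to be the scalar-multiplication argument, since a fixed random variable $X$ need not be bounded, so $\func{\P}{\SSet{\Abs{(c_n - c)X} > \epsilon}} = \func{\P}{\SSet{\Abs{X} > \epsilon / \Abs{c_n - c}}}$ must be controlled using only that $X$ is a.s.\ finite (hence $\func{\P}{\SSet{\Abs{X} > M}} \to 0$ as $M \to \infty$); once that continuity-of-measure fact is invoked the estimate closes. With addition and scalar multiplication continuous, both bullet points are established.
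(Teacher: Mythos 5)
Your proposal is correct, but it is worth noting that it does genuinely more than the paper does: the paper offers no self-contained argument at all, instead deferring the entire theorem to Problems 6, 10 and 14 in Section 5.2 of Royden--Fitzpatrick. Your write-up is essentially the standard solution to those exercises, organized cleanly: the subadditivity and monotonicity of $ \func{f}{t} = t / \Br{1 + t} $ give the pseudo-metric axioms; the Chebyshev-type bound $ \func{f}{\epsilon} \, \func{\P}{\SSet{\Abs{X_{\lambda}} > \epsilon}} \leq \func{\rho_{\P}}{X_{\lambda}, 0} $ gives one direction of the equivalence; the split of the integral over $ \SSet{\Abs{X_{\lambda}} \leq \epsilon} $ and its complement (using $ f \leq 1 $) gives the other; and the equivalence is then used to transport continuity of the vector operations to statements about convergence in probability, where first countability of the pseudo-metric topology justifies working with sequences. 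Your identification of the unbounded-$ X $ issue in the scalar-multiplication step, resolved by continuity of measure ($ \func{\P}{\SSet{\Abs{X} > M}} \to 0 $ as $ M \to \infty $ since $ X $ is a.s.\ finite), is exactly the right delicate point, and your net-based formulation of the equivalence correctly matches the statement, since the two estimates there are pointwise in $ \lambda $ and need no countability. One small step you leave implicit: for the term $ c_{n} \Br{X_{n} - X} $ you need that the convergent scalar sequence $ \Seq{c_{n}}{n \in \N} $ is bounded, say $ \Abs{c_{n}} \leq C $, so that $ \SSet{\Abs{c_{n} \Br{X_{n} - X}} > \epsilon} \subseteq \SSet{\Abs{X_{n} - X} > \epsilon / C} $; this is a one-line addition and does not affect the soundness of the argument.
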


\begin{proof}
Please refer to Problems 6, 10 and 14 in Section 5.2 of \cite{RF}.
\end{proof}


Now, for each $ k \in \N $, define a probability measure $ \c_{k} $ on $ \Pair{\SqBr{k}}{\Power{\SqBr{k}}} $ by
$$
\forall A \subseteq \SqBr{k}: \quad
\func{\c_{k}}{A} \df \frac{\Card{A}}{k},
$$
and let $ \Trip{\Omega}{\Sigma}{\P} $ denote the product probability space $ \D \prod_{k = 1}^{\infty} \Trip{\SqBr{k}}{\Power{\SqBr{k}}}{\c_{k}} $. Define a sequence $ \Seq{S_{n}}{n \in \N} $ in $ \Sigma $ by
$$
\forall n \in \N: \quad
S_{n} \df \Set{\mathbf{v} \in \prod_{k = 1}^{\infty} \SqBr{k}}{\func{\mathbf{v}}{n} = 1}.
$$
Then $ \func{\P}{S_{n}} = \dfrac{1}{n} $ for all $ n \in \N $, and the $ S_{n} $'s form mutually-independent events.

Next, define a sequence $ \Seq{Y_{n}}{n \in \N} $ in $ \RV{\Omega}{\Sigma} $ by
$$
\forall n \in \N: \quad
Y_{n} \df 2^{n} \chi_{S_{n}},
$$
where $ \chi_{S_{n}} $ denotes the indicator function of $ S_{n} $. Then we get for each $ \epsilon > 0 $ that
$$
  \lim_{n \to \infty} \func{\P}{\Set{\omega \in \Omega}{\Abs{\func{Y_{n}}{\omega}} > \epsilon}}
= \lim_{n \to \infty} \func{\P}{S_{n}}
= \lim_{n \to \infty} \frac{1}{n}
= 0.
$$
The first equality is obtained because, for each $ \epsilon > 0 $, we have $ 2^{n} > \epsilon $ for all $ n \in \N $ large enough. Consequently, $ \Seq{Y_{n}}{n \in \N} \converges{\P} 0_{\Omega \to \R} $.

Define a new sequence $ \Seq{X_{n}}{n \in \N} $ in $ \RV{\Omega}{\Sigma} $ by
$$
\forall n \in \N: \quad
X_{n} \df
\begin{cases}
0_{\Omega \to \R}                                 & \text{if} ~ n = 1; \\
\D \sum_{k = 1}^{n - 1} \frac{1}{2^{n - k}} Y_{k} & \text{if} ~ n \geq 2.
\end{cases}
$$
Then $ 2 X_{2} - X_{1} = 2 X_{2} = Y_{1} $, and
\begin{align*}
\forall n \in \N_{\geq 2}: \quad
    2 X_{n + 1} - X_{n}
& = 2 \sum_{k = 1}^{n} \frac{1}{2^{n + 1 - k}} Y_{k} - \sum_{k = 1}^{n - 1} \frac{1}{2^{n - k}} Y_{k} \\
& = \sum_{k = 1}^{n} \frac{1}{2^{n - k}} Y_{k} - \sum_{k = 1}^{n - 1} \frac{1}{2^{n - k}} Y_{k} \\
& = Y_{n}.
\end{align*}
It follows that $ \Seq{2 X_{n + 1} - X_{n}}{n \in \N} = \Seq{Y_{n}}{n \in \N} \converges{\P} 0_{\Omega \to \R} $.

Gathering what we have thus far, observe that
\begin{align*}
\forall n \in \N: \quad
       X_{2 n + 1}
& =    \sum_{k = 1}^{2 n} \frac{1}{2^{2 n + 1 - k}} Y_{k} \\
& =    \sum_{k = 1}^{2 n} \frac{1}{2^{2 n + 1 - k}} \Br{2^{k} \chi_{S_{k}}} \\
& =    \sum_{k = 1}^{2 n} 2^{2 k - 2 n - 1} \chi_{S_{k}} \\
& \geq \sum_{k = n + 1}^{2 n} 2^{2 k - 2 n - 1} \chi_{S_{k}} \\
& \geq \sum_{k = n + 1}^{2 n} \chi_{S_{k}} \\
& \geq \chi_{\bigcup_{k = n + 1}^{2 n} S_{k}}.
\end{align*}
As the $ S_{k} $'s are mutually independent, their complements are as well, so
\begin{align*}
\forall n \in \N: \quad
       \func{\P}{\Set{\omega \in \Omega}{\Abs{\func{X_{2 n + 1}}{\omega}} > \frac{1}{2}}}
& \geq \func{\P}{\bigcup_{k = n + 1}^{2 n} S_{k}} \\
& =    1 - \func{\P}{\Omega \Bigg\backslash \bigcup_{k = n + 1}^{2 n} S_{k}} \\
& =    1 - \func{\P}{\bigcap_{k = n + 1}^{2 n} \Omega \setminus S_{k}} \\
& =    1 - \prod_{k = n + 1}^{2 n} \func{\P}{\Omega \setminus S_{k}} \\
& =    1 - \prod_{k = n + 1}^{2 n} \Br{1 - \frac{1}{k}} \\
& =    1 - \prod_{k = n + 1}^{2 n} \frac{k - 1}{k} \\
& =    1 - \frac{n}{2 n} \\
& =    1 - \frac{1}{2} \\
& =    \frac{1}{2}.
\end{align*}
Hence, $ \Seq{X_{n}}{n \in \N} $ does not converge to $ 0_{\Omega \to \R} $ in probability. By \cref{Convergence in Probability Is Convergence With Respect To a Vector-Space Topology on the Vector Space of Random Variables}:



\begin{Prop}
$ \Pair{\RV{\Omega}{\Sigma}}{\tau_{\P}} $ is therefore a bad topological vector space.
\end{Prop}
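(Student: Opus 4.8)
The plan is to combine the computations already carried out above with \cref{Limits Are Topologically Indistinguishable}, which does all the real work. The construction has exhibited a sequence $ \Seq{X_{n}}{n \in \N} $ for which $ \Seq{2 X_{n + 1} - X_{n}}{n \in \N} = \Seq{Y_{n}}{n \in \N} $ converges in probability to $ 0_{\Omega \to \R} $, while $ \Seq{X_{n}}{n \in \N} $ itself fails to converge in probability to $ 0_{\Omega \to \R} $. By \cref{Convergence in Probability Is Convergence With Respect To a Vector-Space Topology on the Vector Space of Random Variables}, these two probabilistic statements translate verbatim into statements about $ \tau_{\P} $: namely $ \Seq{2 X_{n + 1} - X_{n}}{n \in \N} \converges{\tau_{\P}} 0_{\Omega \to \R} $, whereas it is \emph{not} the case that $ \Seq{X_{n}}{n \in \N} \converges{\tau_{\P}} 0_{\Omega \to \R} $.

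The key point, and the one I expect to be the only genuine subtlety, is that failing to converge to $ 0_{\Omega \to \R} $ does not by itself establish badness: a priori, $ \Seq{X_{n}}{n \in \N} $ might converge to some \emph{other} element of $ \RV{\Omega}{\Sigma} $, in which case the space would still satisfy the defining property of goodness for this particular sequence. This is precisely the gap that \cref{Limits Are Topologically Indistinguishable} closes. Applying that proposition with $ x = 0_{\Omega \to \R} $, using $ \Seq{2 X_{n + 1} - X_{n}}{n \in \N} \converges{\tau_{\P}} 0_{\Omega \to \R} $, we conclude that $ \Seq{X_{n}}{n \in \N} $ either converges to $ 0_{\Omega \to \R} $ or has no $ \tau_{\P} $-limit at all. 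Since the first alternative has just been ruled out, the second must hold: $ \Seq{X_{n}}{n \in \N} $ has no $ \tau_{\P} $-limit whatsoever.

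It then remains only to read off the definition of goodness. We have produced a sequence $ \Seq{X_{n}}{n \in \N} $ in $ \RV{\Omega}{\Sigma} $ such that $ \Seq{2 X_{n + 1} - X_{n}}{n \in \N} $ possesses a $ \tau_{\P} $-limit while $ \Seq{X_{n}}{n \in \N} $ possesses none. Hence $ \Pair{\RV{\Omega}{\Sigma}}{\tau_{\P}} $ violates the defining condition of a good topological vector space, and is therefore bad. Everything outside the invocation of \cref{Limits Are Topologically Indistinguishable} is bookkeeping and a direct appeal to the translation theorem, so no further calculation is needed.
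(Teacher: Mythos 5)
Your proposal is correct and takes essentially the same approach as the paper: the paper's proof is exactly the construction preceding the Proposition together with \cref{Limits Are Topologically Indistinguishable}, which rules out $ \Seq{X_{n}}{n \in \N} $ having some other $ \tau_{\P} $-limit. The only difference is presentational --- you place the appeal to \cref{Limits Are Topologically Indistinguishable} inside the proof, where it logically belongs, whereas the paper records it as a remark immediately after the statement.
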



By \cref{Limits Are Topologically Indistinguishable}, $ \Seq{X_{n}}{n \in \N} $ does not, in fact, converge in probability at all.



\section{Acknowledgments}


The author would like to express his deepest thanks to Dr. Jochen Wengenroth for communicating his example above of a bad topological vector space.




\end{document}